\numberwithin{equation}{section}
 \def\numset#1{{\mathbb #1}}
 \def\setZ{\numset{Z}}
\def\setR{\numset{R}}
\theoremstyle{plain}
\newtheorem{Th}{Theorem}[section]
\newtheorem{Lemma}[Th]{Lemma}
 \theoremstyle{definition}
\newtheorem{Conj}[Th]{Conjecture}
\newtheorem{Rem}{Remark}
\newtheorem{?}[Th]{Problem}
\newcommand{\kong}[2]{ \equiv #1 \pmod{#2}}
\newcommand{\eq}[1]{\eqref{#1}}
\newcommand{\xv}{{\underline x}}
\begin{document}

\title{More differences than multiple sums}

\author{Imre Z. Ruzsa}
\address{Alfr\'ed R\'enyi Institute of Mathematics\\
     Budapest, Pf. 127\\
     H-1364 Hungary
}
\email{ruzsa@renyi.hu}
 \thanks{Author was supported by ERC--AdG Grant No.321104  and
 Hungarian National Foundation for Scientific
 Research (OTKA), Grants No.109789 , 
 and NK104183.}

 \subjclass{11B13, 11B34}
     \begin{abstract}
    We compare the size of the difference set $A-A$ to that of the set $kA$ of $k$-fold sums. We show the existence
of sets such that $|kA| < |A-A|^{\alpha_k}$ with $\alpha_k<1$. 
     \end{abstract}

     \maketitle

     \section{Introduction}

     The aim of this paper is to compare the size of the difference set $A-A$ and the size of
      \[ kA = A + \ldots +A, \ k \text{ times} \]
(we shall write $k \cdot A = \{ ka: a\in A \} $ for a set of multiples).

      Much has been written about the most natural case $k=2$. Freiman and Pigaev
\cite{freimanpigaev} proved that $|2A|^{3/4} \leq |A-A| \leq |2A|^{4/3}$. These are still the best exponents known,
though there is no reason to expect that either of them is exact. For other aspects and generalizations
see the papers \cite{r78a},\cite{r92e},\cite{hennecartetal99},\cite{r09b} and the books
\cite{nathanson96},\cite{taovu06},\cite{r09c}.

We will show the existence of sets of integers such that  $|kA| < |A-A|^{\alpha_k}$ with $\alpha_k<1$,
and of subsets of $\setZ_q$, the set of residues modulo $q$ for all sufficiently large $q$ such that
 \[ A-A = \setZ_q, \ |kA| < q^{\alpha_k} .  \]
 As far as I know, the only paper to deal with this problem is Haight's \cite{haight73}, who proved the existence,
for arbitrary prescribed positive integers $k$ and $l$, of a $q$ and a set $A\subset\setZ_q$ such that
$  A-A = \setZ_q $ and $kA$ avoids $l$ consecutive residues, and used this to show the existence of a set
$B$ of reals such that $B-B=\setR$ but $kB$ is of measure 0 for all $k$.

 Clearly if $|kA| < q^{\alpha_k}$, then there will
be gaps of size $ >  q^{1-\alpha_k}$, so the above result implies Haight's. On the other hand, it is not
difficult to deduce our result from Haight's either, so the two are essentially equivalent.
I also acknowledge that, while the details will be rather different, the main idea is taken from
Haight's paper. 

Haight's work remained rather unnoticed. A reason is that it was well ahead of its time,
before additive combinatorics became a fashionable subject; it is not an easy reading either.

In Section \ref{os} we shall consider the opposite question about the maximal possible size of
$kA$ compared to $A-A$.

\section{The main results}

We shall consider three ways of comparing sums and differences. For positive integers $k$ and $q$, $q>1$ write
 \[ F_k(q) = \min \{ |kA|: A\subset \setZ_q, A-A = \setZ_q \}, \]
\[ G_k(q) = \min \big\{ |kA|: A\subset \setZ, A-A \supset \{a+1, \ldots, a+q \} \text{ for some } a \big\}, \]
\[ H_k(q) = \min \{ |kA|: A\subset \setZ, |A-A| \geq q  \}. \]

Put
 \[ \alpha_k = \inf_{q\geq 2} \frac{\log G_k(q)}{ \log q} . \]

 \begin{Th}\label{mindegy}
   \[ \lim \frac{\log F_k(q)}{ \log q} =  \lim \frac{\log G_k(q)}{ \log q} =  \lim \frac{\log H_k(q)}{ \log q}  \]
    \[ =  \inf_{q\geq 2} \frac{\log H_k(q)}{ \log q} = \alpha_k. \]
 \end{Th}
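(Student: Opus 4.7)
The plan is to treat the four equalities as three tasks: (i) show each of $\log G_k(q)/\log q$ and $\log H_k(q)/\log q$ tends to its own infimum, via submultiplicativity, monotonicity, and Fekete's lemma; (ii) sandwich $F_k$ between $H_k$ and $G_k$ so that its limit agrees with $G_k$'s; (iii) prove that the two infima $\alpha_k$ and $\beta_k := \inf_q \log H_k(q)/\log q$ actually coincide.

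For (i), I would verify $G_k(q_1 q_2) \leq G_k(q_1) G_k(q_2)$ by taking witnesses $A_1, A_2 \subset \setZ$ of the factors and forming $A = A_1 + q_1 A_2$. Since each $A_i - A_i$ contains an interval of length $q_i$, the sum $(A_1 - A_1) + q_1 (A_2 - A_2)$ contains an interval of length $q_1 q_2$, and $kA = kA_1 + q_1 \cdot kA_2$ gives the trivial bound $|kA| \leq |kA_1| |kA_2|$. For $H_k$ the analogous construction $A_1 + M A_2$ with $M$ larger than $\operatorname{diam}(A_1 - A_1)$ makes the sum direct. Monotonicity in $q$ is automatic since a weaker constraint enlarges the feasible family. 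Applying Fekete to the subadditive sequence $n \mapsto \log G_k(q_0^n)$ and interpolating between consecutive powers of $q_0$ via monotonicity gives $\lim_q \log G_k(q)/\log q = \inf_q \log G_k(q)/\log q = \alpha_k$; the same reasoning for $H_k$ produces the common value $\beta_k$.

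For (ii), $F_k(q) \leq G_k(q)$ follows by reducing a $G_k$-witness modulo $q$. Conversely, given $A \subset \setZ_q$ with $A - A = \setZ_q$, lift to $\tilde A \subset [0, q-1]$ and put $A' = \tilde A \cup (\tilde A + q)$. For any $r \in [0, q-1]$, some $d \in \tilde A - \tilde A \subset [-(q-1), q-1]$ satisfies $d \equiv r \pmod{q}$; either $d = r$ (then $r \in \tilde A - \tilde A \subset A' - A'$) or $d = r - q$ (then $r = d + q \in (\tilde A + q) - \tilde A \subset A' - A'$). Hence $A' - A' \supset [0, q-1]$. Moreover $kA' = \bigcup_{j=0}^{k} (k \tilde A + jq)$, while $k \tilde A \subset [0, k(q-1)]$ projects to $kA \subset \setZ_q$ with fibres of size at most $k$, so $|k \tilde A| \leq k |kA|$ and $|kA'| \leq k(k+1)|kA|$. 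Thus $G_k(q) \leq k(k+1) F_k(q)$; the ratios $\log F_k/\log q$ and $\log G_k/\log q$ differ by $O(1/\log q)$, and their common limit is $\alpha_k$.

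It remains to show $\beta_k = \alpha_k$; the direction $\beta_k \leq \alpha_k$ is immediate from $H_k \leq F_k$. For the reverse, the goal is to convert an arbitrary $H_k$-witness $A \subset \setZ$ with $|A - A| \geq q$ and $|kA| = s$ into a witness $A^* \subset \setZ_{q^*}$ for $F_k(q^*)$, with $A^* - A^* = \setZ_{q^*}$, $|kA^*| \leq C_k s$, and $q^* \geq q^{1 - o(1)}$; this would give $\alpha_k \leq \log s/\log q + o(1)$ and hence $\alpha_k \leq \beta_k$. The naive choice $A^* = A \bmod N$ for the largest $N$ with $(A - A) \bmod N = \setZ_N$ can fail to produce such an $N$: for Sidon-like $A$ the best $N$ is only of order $\sqrt{|A - A|}$. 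This is the hardest step. The approach I would try is to exploit that $|kA|$ small imposes structure on $A$ (via Pl\"unnecke--Ruzsa and Freiman-type bounds, $A$ is efficiently covered by a generalized arithmetic progression), so that $A - A$ contains long arithmetic intervals at an appropriate scale, and from which a modular reduction along the right direction produces $A^*$ in the required range.
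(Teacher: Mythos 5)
Your outline for parts (i) and (ii) matches the paper's proof in all essentials: submultiplicativity via $A_1+q_1\cdot A_2$ (and $A_1+M\cdot A_2$ for $H_k$), monotonicity, Fekete with interpolation, the trivial inequality $F_k\le G_k$ by reduction mod $q$, and the doubling construction $A'=\tilde A\cup(\tilde A+q)$ to get $G_k(q)\le k(k+1)F_k(q)$ (the paper uses a slightly different lift giving $G_k(2q+1)\le 2kF_k(q)$, same idea). These steps are fine.

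The gap is in part (iii), and you have correctly located it: you need a bound of the shape $F_k(q^*)\le C_k\,(\text{polylog})\cdot H_k(q)$ with $q^*\ge q^{1-o(1)}$ in order to conclude $\alpha_k\le\inf_q\log H_k(q)/\log q$. But you do not prove such a bound; you only sketch a plan ``I would try to exploit Freiman-type structure,'' explicitly flagging it as the hardest step. As stated, this is not an argument, and it is not at all clear it can be made to work without substantial loss: Freiman's theorem gives a GAP containing $A$ whose size and dimension degrade badly when the doubling $|kA|/|A|$ is large, and in the regime relevant here $|kA|$ can be as small as $|A-A|^{1/2}$, where the GAP structure is essentially vacuous. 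One would also need to pass from ``$A$ lies in a GAP'' to ``$A-A$ contains an interval of length close to $q$ along some direction,'' which is an additional nontrivial step.

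The paper circumvents all of this with an elementary probabilistic device that is the genuinely new content of Lemma~\ref{FHviszony}: a random projection $\pi_t(n)=[q\{tn\}]$, which is quasi-additive (so $\pi_t$ nearly commutes with taking sumsets and difference sets) and for a random $t$ preserves a positive fraction of any $q$-element set (Lemma~\ref{vetites}). Applying this to a $q$-element subset of $A-A$ produces $A_2\subset\setZ_q$ with $|A_2-A_2|>q/6$ and $|kA_2|\le k\,H_k(q)$. Then a Lorentz-type random covering lemma (Lemma~\ref{Lorenz}) furnishes small sets $B_1,B_2\subset\setZ_q$ of size $O(\sqrt{\log q})$ with $(A_2-A_2)+B_1+B_2=\setZ_q$; setting $A_3=A_2+(B_1\cup-B_2)$ gives $A_3-A_3=\setZ_q$ and $|kA_3|\le k(c\log q)^{k/2}H_k(q)$. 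This yields $F_k(q)\le c_k(\log q)^{k/2}H_k(q)$, exactly what is needed, with no appeal to inverse theorems. Without this lemma (or a substitute of comparable precision) your proof does not establish the two equalities involving $H_k$, so it does not prove the theorem.
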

One possible quantity is missing from the list.

\begin{?}
Is
 \[  \inf_{q\geq 2} \frac{\log F_k(q)}{ \log q} = \alpha_k \ ? \]
 \end{?}

 \begin{Th}\label{main}
    \[ 1-2^{-k} \leq \alpha_k <1 \]
for all $k$.   
 \end{Th}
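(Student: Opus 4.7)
The plan is to prove the two bounds separately: the lower bound is a short Plünnecke argument, while the upper bound requires a construction modelled on Haight's.

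For $\alpha_k \geq 1 - 2^{-k}$, I would apply Plünnecke's inequality to the pair $(jA, A)$. Setting $L_j := |(j+1)A|/|jA|$, the hypothesis $|jA + A| \leq L_j |jA|$ combined with Plünnecke's conclusion $|nY - mY| \leq K^{n+m}|X|$ (taken with $X = jA$, $Y = A$, $K = L_j$, $n = m = 1$) yields
$$|A - A| \leq L_j^2 |jA| = \frac{|(j+1)A|^2}{|jA|},$$
i.e.\ $|(j+1)A|^2 \geq |A-A| \cdot |jA|$. Writing $|jA| = |A-A|^{1-\varepsilon_j}$ (assume $|A-A|>1$), this recursion reads $\varepsilon_{j+1} \leq \varepsilon_j / 2$, with base case $\varepsilon_1 \leq 1/2$ coming from the trivial $|A|^2 \geq |A-A|$. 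Iterating gives $\varepsilon_k \leq 2^{-k}$, so $|kA| \geq |A-A|^{1 - 2^{-k}}$. For any $A$ witnessing $G_k(q)$ we have $|A-A| \geq q$, so $G_k(q) \geq q^{1-2^{-k}}$, which gives $\alpha_k \geq 1 - 2^{-k}$ directly from the definition.

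For the upper bound $\alpha_k < 1$ an explicit construction is needed; following the hint in the introduction, I would mimic Haight's scheme. The aim is, for each $k$ and for suitably large $q$, to build $A \subset \setZ_q$ in stages: at each stage one adds elements that widen $A - A$ until it fills $\setZ_q$, while simultaneously constraining $kA$ to lie inside a pre-designed ``skeleton'' $S \subset \setZ_q$ of size $|S| < q^\alpha$ with $\alpha = \alpha(k) < 1$. A natural source of the asymmetry between differences and $k$-fold sums is a multi-scale/digit construction: at each scale a small base set has $k$-fold sum sitting in a thin arithmetic pattern while its difference set already covers an interval, and the sparsity exponents multiply across scales, producing an overall exponent below $1$.

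The main obstacle is clearly the upper bound: one has to verify simultaneously that the differences really do sweep out all of $\setZ_q$ and that cross-scale interactions do not destroy the sparsity of $kA$, all while optimising the scale ratios and the base sets so that the final exponent remains strictly below $1$ uniformly in $q$. The lower bound, by contrast, is a one-line recursion once Plünnecke is invoked.
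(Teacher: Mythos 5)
Your lower bound is correct. You obtain the key inequality $|(j+1)A|^2 \geq |A-A|\,|jA|$ from Pl\"unnecke--Ruzsa; the paper gets exactly the same inequality from the Ruzsa triangle inequality $|X||Y-Z|\leq |X-Y||X-Z|$ with $Y=Z=A$, $X=-jA$, which is much lighter machinery (no graph-theoretic or Petridis argument needed). The recursion $\varepsilon_{j+1}\leq\varepsilon_j/2$ with $\varepsilon_1\leq 1/2$ and the passage from $|kA|\geq|A-A|^{1-2^{-k}}$ to $G_k(q)\geq q^{1-2^{-k}}$ to $\alpha_k\geq 1-2^{-k}$ are all fine.

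The upper bound, however, is not proved: what you have written is a statement of intent, not an argument. You say ``a construction modelled on Haight's'' with a ``multi-scale/digit'' skeleton, and then list the difficulties that would have to be overcome (``verify simultaneously that the differences really do sweep out all of $\setZ_q$'', ``cross-scale interactions do not destroy the sparsity'', ``optimising the scale ratios''), but you do not resolve any of them. In particular, no base set, no function, no modulus, no estimate appears. This is precisely where all the work in the theorem lies. The paper's route is: (i) reduce, via Theorem \ref{mindegy} and inequality \eqref{GF}, to exhibiting a single $q$ and a set $A\subset\setZ_q$ with $A-A=\setZ_q$ and $|kA|<q/k$; (ii) produce such $A$ by Lemma \ref{gyengebb}, taking $A=\{\varphi(x),\,x+\varphi(x):x\in\setZ_q\}$ for a function $\varphi:\setZ_q\to\setZ_q$ --- this form automatically forces $A-A=\setZ_q$, killing your first worry outright --- and then building $q$ as a product of many primes and $\varphi$ coordinatewise, one stage per ``level'' $m=1,\dots,k$, so that for each pattern $(u,v)$ of level $m$ there is a dedicated prime coordinate in which the corresponding contribution to the $k$-fold sum vanishes. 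The densities $1/p_j$ of the exceptional coordinates are summed and made small by choosing the primes large, which handles your ``cross-scale interaction'' worry by fiat. None of this is implicit in your sketch, so the upper bound remains an open gap in your proposal.
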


The exact value is not know except the obvious $\alpha_1=1/2$. The bound $\alpha_2 \geq 3/4$ is Freiman and Pigaev's \cite{freimanpigaev}.
The upper bound from the construction below will be of type 1-1/tower.

\section{Properties of $F,G,H$}

We list some properties of these functions that together will imply Theorem \ref{mindegy}.

\begin{Lemma}\label{monoton} (Monotonicity.)
  If $q<q'$, then
   \[ G_k(q) \leq G_k(q'), \]
    \[ H_k(q) \leq H_k(q'). \]
\end{Lemma}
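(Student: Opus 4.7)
The plan is to observe that in both cases, the feasible set over which the minimum is taken shrinks as $q$ grows, so the minimum can only increase. Both parts are essentially a one-line ``larger constraint, smaller (or equal) feasible set, larger (or equal) minimum'' argument.

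For $H_k$, I would note that $|A-A|\geq q'$ implies $|A-A|\geq q$ whenever $q<q'$, so
\[ \{A\subset\setZ : |A-A|\geq q'\} \;\subset\; \{A\subset\setZ : |A-A|\geq q\}, \]
and the minimum of $|kA|$ over the left-hand set is at least the minimum over the right-hand set, which gives $H_k(q)\leq H_k(q')$.

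For $G_k$, the same idea applies: if $A-A\supset\{a+1,\ldots,a+q'\}$ for some $a$ and $q<q'$, then $A-A\supset\{a+1,\ldots,a+q\}$ with the same $a$, so the feasibility condition for $G_k(q')$ is stronger than that for $G_k(q)$. The minimum of $|kA|$ over the smaller feasible set is at least the minimum over the larger one, yielding $G_k(q)\leq G_k(q')$.

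There is no real obstacle here; the statement is a direct consequence of the monotonicity of the defining constraints. The only thing to be slightly careful about is that the ``for some $a$'' quantifier in the definition of $G_k$ does not cause trouble, but since the same $a$ that witnesses the stronger condition automatically witnesses the weaker one, the inclusion of feasible sets is immediate.
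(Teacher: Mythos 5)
Your argument is correct and is exactly the ``shrinking feasible set'' observation the paper deems evident (the paper gives no proof, remarking only ``Obvious, but important.''). Your write-up simply makes that obvious argument explicit, including the harmless point about the ``for some $a$'' quantifier.
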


(Obvious, but important.)

\begin{?}
Is $F_k$ monotonically increasing?
 \end{?}

 \begin{Conj}
 No. Probably it depends on the multiplicative structure of $q$, not just its size.  
 \end{Conj}

\begin{Lemma}\label{submult}(Submultiplicativity.)
Let $q=q_1 q_2$. We have
\begin{equation}\label{Fsub}
F_k(q) \leq F_k(q_1) F_k(q_2) \text{ if } \gcd (q_1,q_2)=1, \end{equation}
\begin{equation}\label{Gsub}
G_k(q) \leq G_k(q_1) G_k(q_2) \text{ always,}  \end{equation}
\begin{equation}\label{Hsub}
H_k(q) \leq H_k(q_1) H_k(q_2) \text{ always.}  \end{equation}
\end{Lemma}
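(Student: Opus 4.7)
The plan is a product-type construction in all three cases. I would take near-optimal witnesses $A_1, A_2$ for parameters $q_1, q_2$ and combine them into a single set $A$ that witnesses the required bound for $q = q_1 q_2$. On the count side the argument rests uniformly on the trivial sumset inequality $|kA_1 + L\cdot kA_2| \le |kA_1|\,|kA_2|$ applied to $A = A_1 + L\cdot A_2$; the three inequalities differ only in the choice of the ambient group and of the scaling factor $L$, which must be picked so that $A-A$ is large enough in the sense relevant to $F$, $G$, or $H$.

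For \eqref{Fsub}, since $\gcd(q_1,q_2)=1$, I would use the Chinese remainder isomorphism $\setZ_q \cong \setZ_{q_1}\times\setZ_{q_2}$. With $A_i \subset \setZ_{q_i}$ witnessing $F_k(q_i)$, the product $A = A_1 \times A_2$, viewed in $\setZ_q$, satisfies $A-A = (A_1-A_1)\times(A_2-A_2) = \setZ_q$, while $kA = kA_1\times kA_2$ has cardinality exactly $F_k(q_1)F_k(q_2)$.

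For \eqref{Gsub} and \eqref{Hsub} I would work in $\setZ$ and take $A = A_1 + L\cdot A_2$ for a well-chosen $L$. The key observation for $G_k$ is that $L = q_1$ is precisely the right scale to stitch $q_2$ translates of the interval $\{a_1+1,\dots,a_1+q_1\}\subset A_1-A_1$ into one uninterrupted block: the union $\{a_1+1,\dots,a_1+q_1\} + q_1\cdot\{a_2+1,\dots,a_2+q_2\}$ is exactly $\{a_1+q_1 a_2+1,\dots, a_1+q_1 a_2+q_1 q_2\}$, a run of $q_1 q_2$ consecutive integers. For $H_k$ only the cardinality of $A-A$ matters, so there is much more slack: choosing $L$ larger than the diameter of $A_1-A_1$ forces every element of $(A_1-A_1)+L(A_2-A_2)$ to have a unique representation, so $|A-A| \ge |A_1-A_1|\cdot|A_2-A_2| \ge q_1 q_2$. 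In both cases the sumset bound applied to $kA = kA_1 + L\cdot kA_2$ then gives the desired estimate on $|kA|$.

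There is no serious obstacle: the entire argument is a matter of picking the right scaling so that differences combine in the prescribed sense, after which the sumset inequality supplies the multiplicative bound on $|kA|$ automatically. The only place where care is really needed is the $G_k$ case, where the choice $L = q_1$ is pinned down exactly — any smaller value would cause overlaps that shorten the union, and any larger value would leave gaps between the translates.
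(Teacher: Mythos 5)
Your proposal is correct and follows essentially the same route as the paper: Chinese remainder for $F_k$, the set $A_1 + q_1\cdot A_2$ for $G_k$, and $A_1 + L\cdot A_2$ with $L$ large for $H_k$, with the sumset bound $|kA_1 + L\cdot kA_2|\le |kA_1|\,|kA_2|$ doing the counting. (A cosmetic slip: the interval $\{a_1+1,\dots,a_1+q_1\}+q_1\cdot\{a_2+1,\dots,a_2+q_2\}$ actually starts at $a_1+q_1a_2+q_1+1$ rather than $a_1+q_1a_2+1$, but it is still a run of $q_1q_2$ consecutive integers, which is all that is needed.)
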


\begin{proof}
  Let $A_1, A_2$ be sets that give the value of our function for $q_1$ and $q_2$, resp.

To see \eq{Fsub} notice that $\setZ_q$ is isomorphic to the direct product $\setZ_{q_1} \times \setZ_{q_2}$, and the set
$A = A_1 \times A_2 $ gives the bound for  $F(q)$.

 To see \eq{Gsub} take the set $A=A_1 + q_1 \cdot A_2$.

To see \eq{Hsub} take the set $A=A_1 + m \cdot A_2$ with an integer $m$ chosen sufficiently large to avoid unwanted coincidences.
\end{proof}

\begin{?} Does $F_k(q) \leq F_k(q_1) F_k(q_2) $ hold for not coprime integers?
 \end{?}
 
 Monotonicity and submultiplicavity imply that
  \[ \lim \frac{\log G_k(q)}{ \log q} =\inf_{q\geq 2} \frac{\log G_k(q)}{ \log q} =\alpha_k, \
 \lim \frac{\log H_k(q)}{ \log q}    =  \inf_{q\geq 2} \frac{\log H_k(q)}{ \log q} . \]
To prove the other equalities in Theorem \ref{mindegy}
we show that these functions have the same order of magnitude.

\begin{Lemma}\label{viszony}
 For all $q$ we have
\begin{equation}\label{FG} F_k(q) \leq G_k(q), \end{equation}
\begin{equation}\label{HG} H_k(q) \leq G_k(q), \end{equation}
\begin{equation}\label{GF}  G_k(q) \leq G_k(2q+1) \leq 2kF_k(q). \end{equation}
\end{Lemma}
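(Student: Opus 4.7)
Inequality \eq{HG} is immediate from the definitions: any set $A \subset \setZ$ with $A - A \supset \{a+1, \ldots, a+q\}$ satisfies $|A - A| \geq q$, so every set admissible for $G_k(q)$ is admissible for $H_k(q)$. Inequality \eq{FG} is almost as easy: take $A \subset \setZ$ attaining $G_k(q)$ and reduce modulo $q$; the image $B \subseteq \setZ_q$ satisfies $B - B = \setZ_q$ (since the $q$ consecutive integers in $A - A$ cover every residue modulo $q$), and $|kB| \leq |kA|$ because $kB$ is the image of $kA$ modulo $q$.

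The first inequality of \eq{GF} is Lemma \ref{monoton}. For the second, I take $B \subseteq \{0, 1, \ldots, q-1\}$ with $B - B \equiv \setZ_q \pmod{q}$ and $|kB| = F_k(q)$, and form $A = B \cup (B + q)$. Since
\[ A - A = (B - B) \cup \bigl((B - B) + q\bigr) \cup \bigl((B - B) - q\bigr) \]
and $B - B \subset \{-(q-1), \ldots, q-1\}$ meets every residue class modulo $q$, a short case check gives $\{-q, -q+1, \ldots, q\} \subseteq A - A$, supplying the required $2q+1$ consecutive integers.

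The heart of the argument is bounding $|kA|$. Writing $A = B + \{0, q\}$, I have $kA \subseteq kB_{\setZ} + \{0, q, 2q, \ldots, kq\}$, where $kB_{\setZ}$ denotes the $k$-fold sumset formed in $\setZ$. I then partition by residue class modulo $q$. Elements of $kB_{\setZ}$ lie in $\{0, 1, \ldots, k(q-1)\}$, so those in residue class $r \in \{0, \ldots, q-1\}$ take the form $r + iq$ with $i$ ranging over some $I_r \subseteq \{0, 1, \ldots, k-1\}$. After adding $\{0, q, \ldots, kq\}$, the admissible indices become $I_r + \{0, 1, \ldots, k\} \subseteq \{0, 1, \ldots, 2k-1\}$, so at most $2k$ elements of $kA$ sit in each residue class. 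The classes with $I_r \neq \emptyset$ are exactly the $|kB|$ residues occupied by $kB$, giving $|kA| \leq 2k |kB| = 2k F_k(q)$.

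The main subtlety is this last counting: to obtain the factor $2k$, one must exploit that the indices $i$ of $kB_{\setZ}$ in any fixed residue class lie in an interval of length $k$, so that after adding $\{0, 1, \ldots, k\}$ they remain in a length-$2k$ interval. A less careful argument would only yield $(k+1)k$ or worse, which would not suffice.
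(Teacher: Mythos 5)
Your proof is correct and takes essentially the same approach as the paper: the paper lifts an optimal $A\subset\setZ_q$ to $A'=\{n:-q<n\leq q,\ n\bmod q\in A\}$, which is (up to translation) your $B\cup(B+q)$, and bounds $|kA'|$ by covering $kA'\subset[-k(q-1),kq]$ with $2k$ length-$q$ intervals, each meeting every residue class at most once. Your residue-class bookkeeping (at most $2k$ lifts per occupied residue) is a re-expression of that same count.
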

\begin{proof}
  Inequalities \eq{FG} and \eq{HG} are evident.

 To show \eq{GF}, let $A\subset\setZ_q$ be a set such that
  $A-A=\setZ_q$, $ | kA| = F_k(q)$. Define
   \[ A' = \{ n: -q < n \leq q, \ n \text{ \rm mod }q \in A \}.  \]
We claim that $A'-A'$ contains $2q+1$ consecutive integers, namely those in the  interval $[-q, q]$.
Indeed, if $-q \leq m \leq q$, then there are $x, y\in A'$, $1 \leq x,y \leq q$ such that
 \[ x-y \kong{m}{q} . \]
Consequently one of $x-y$, $(x-q)-y$, $x-(y-q)$ will be equal to $m$, and all are elements of $A'-A'$.
To estimate $|kA'|$ observe that
 \[ kA' \subset [-k(q-1), kq] . \]
This interval can be covered by $2k$ intervals of length $q$, and in each our set has at mos
$|kA|$ elements, hence  $|kA'| \leq 2k |kA|$.
\end{proof}

These results partially show Theorem \ref{mindegy}, except for the quantities involving $H_k$.
For $H_k$ we shall give the following estimate.

\begin{Lemma}\label{FHviszony}
\begin{equation}\label{FH} F_k(q) \leq c_k (\log q)^{k/2} H_k(q). \end{equation}
\end{Lemma}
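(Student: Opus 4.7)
By inequalities \eqref{FG} and \eqref{GF} of Lemma \ref{viszony} together with the monotonicity of $G_k$ (Lemma \ref{monoton}), it will be enough to prove $G_k(q) \leq c_k (\log q)^{k/2} H_k(q)$. The plan is to take $A \subset \mathbb{Z}$ witnessing $H := H_k(q)$ (so $|A - A| \geq q$ and $|kA| = H$), fix a prime $p \in [q, 2q]$ via Bertrand's postulate, and after a preliminary modification of $A$---restricting to a well-chosen subset of small diameter, or invoking a Freiman-$k$-isomorphism---arrange that $A \subset [0, p-1]$ with the reduction $\bar A$ modulo $p$ still satisfying $|\bar A - \bar A| \geq q/2$ and $|k\bar A| \leq H$.

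The main construction is to take $B := \bar A + T \subset \mathbb{Z}_p$, where $T = \{t_1, \ldots, t_m\}$ with $m := \lceil C \sqrt{\log q}\,\rceil$ is a random set, the $t_i$ sampled independently and uniformly in $\mathbb{Z}_p$. The $k$-fold sumset is bounded by
\[
|kB| = |k\bar A + kT| \leq |k\bar A| \cdot |kT| \leq H \binom{m+k-1}{k} \leq c_k (\log q)^{k/2} H,
\]
and $B - B = (\bar A - \bar A) + (T - T)$. To finish I will need $B - B = \mathbb{Z}_p$ with positive probability; once this is established, lifting $B$ to integers as in the proof of \eqref{GF} gives a witness for $G_k(2p+1) \leq 2k|kB|$, and $G_k$-monotonicity yields $G_k(q) \leq G_k(2p+1) \leq c'_k(\log q)^{k/2} H$, as required.

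The probabilistic step is the heart of the argument and the main obstacle. For each $y \in \mathbb{Z}_p$ the bad event ``$y \notin B - B$'' is the event that the random vertex set $T$ is independent in the Cayley graph on $\mathbb{Z}_p$ with connection set $E_y := y - (\bar A - \bar A)$, a set of density $|E_y|/p \geq 1/4$. Heuristically, if the $m(m-1)$ pair indicators $\mathbf{1}[t_i - t_j \in E_y]$ were independent, then their joint non-occurrence probability would be at most $(3/4)^{m(m-1)}$, which for $m = C \sqrt{\log q}$ with $C$ large enough is smaller than $1/p$, so a union bound over $y$ would give $\mathbb{P}(B - B = \mathbb{Z}_p) > 0$. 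However, the $m(m-1)$ pair indicators are \emph{not} independent: they share the $m$ underlying random variables, and the standard tools (union bound over the $m-1$ ``star'' edges from $t_1$, Chebyshev/Paley--Zygmund, Janson's inequality, Azuma--Hoeffding) all yield only $\mathbb{P}[\text{bad}] \leq e^{-cm}$, forcing the suboptimal $m \asymp \log q$. Extracting the sharper $\sqrt{\log q}$ rate is where the real technical work lies: it seems to require either a high-moment bound exploiting that each $t_i$ affects only $2(m-1)$ indicators in a nicely balanced way, or a partly structural (Sidon-type) choice of $T$ whose $m(m-1)$ distinct pairwise differences are spread ``independently enough'' across $\mathbb{Z}_p$.
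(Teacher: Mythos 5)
Your diagnosis of the obstruction is accurate: for a single random $T$ of size $m$, the bad event $y \notin (\bar A - \bar A) + (T - T)$ really does resist all the tools you list, and you cannot push below $m \asymp \log q$ that way. The gap in your proposal is that you are trying to make a \emph{self}-difference $T-T$ do the covering, which forces the dependence problem; the paper sidesteps it entirely by decoupling.

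The missing trick is this. Apply Lemma \ref{Lorenz} to the set $A_2 - A_2 \subset \setZ_q$ (which has density $> 1/6$) with parameter $k=2$: it produces \emph{two} sets $B_1, B_2 \subset \setZ_q$ with $|B_i| \leq m = O(\sqrt{\log q})$ and $(A_2 - A_2) + B_1 + B_2 = \setZ_q$. Lemma \ref{Lorenz} is a two-round greedy covering, and because the two rounds are independent choices the uncovered density drops from $1-t$ to $(1-t)^m$ and then to $(1-t)^{m^2}$ --- that is where the $\sqrt{\log q}$ comes from, with no high-moment estimates or Sidon-type structure required. Then set $A_3 = A_2 + (B_1 \cup -B_2)$. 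Since
\[
(B_1 \cup -B_2) - (B_1 \cup -B_2) \ \supset\ B_1 - (-B_2) \ =\ B_1 + B_2,
\]
we get $A_3 - A_3 \supset (A_2 - A_2) + B_1 + B_2 = \setZ_q$, while $|B_1 \cup -B_2| \leq 2m$ gives
\[
|kA_3| \ \leq\ |kA_2| \cdot \bigl|k(B_1 \cup -B_2)\bigr| \ \leq\ |kA_2| \binom{2m+k-1}{k} \ \leq\ c_k (\log q)^{k/2}\, |kA_2|.
\]
This is exactly the rate you were fishing for in your last paragraph; the point is to cover by a sum $B_1 + B_2$ of independently chosen sets rather than by the difference set of one random set.

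A smaller but real gap: your passage from a set $A$ of integers to a set in a cyclic group via a prime $p \in [q,2q]$ is not spelled out, and the step ``after a preliminary modification\ldots arrange that $A \subset [0,p-1]$ with $|\bar A - \bar A| \geq q/2$'' hides an issue, since $A$ can have huge diameter and a Freiman $k$-isomorphism onto a short interval need not exist. The paper handles this cleanly by the map $\pi_t(n) = [q\{tn\}]$: by Lemma \ref{vetites} one can pick $t$ so that $|\pi_t(A-A)| > q/3$, and quasi-additivity of $\pi_t$ then yields a set $A_2 \subset \setZ_q$ with $|A_2-A_2| > q/6$ and $|kA_2| \leq k|kA|$. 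Your $|kB|$ computation and the lift back to $G_k$ via \eqref{GF} are fine once these two points are repaired.
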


The proof of this lemma is relegated to Section \ref{fir}. To prove our main result, Theorem \ref{main},
we shall work with $F$ and $G$; the results about $H$ are included because it is perhaps the most natural
quantity to consider.

\begin{?} Is $F_k(q) \leq c_k  H_k(q)$? Is $H_k(q) \leq F_k(q)$?
 \end{?}

\section{The construction}

In this section we prove that $\alpha_k<1$. 
We start by proving the following, seemingly weaker result.

\begin{Lemma}\label{gyengebb}
For every positive integer $k$ and positive $\varepsilon$ there is a positive integer $q$ and a set $A\subset\setZ_q$ such that 
$A-A=\setZ_q$, $|kA| < \varepsilon q$.
\end{Lemma}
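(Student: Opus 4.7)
The plan is to reduce the lemma to a single base case via submultiplicativity and then to build the base case by a multi-scale construction inspired by Haight.

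\textbf{Reduction.} Using \eqref{Hsub} iteratively, $H_k(q_0^m)\le H_k(q_0)^m$. Combined with Lemma~\ref{FHviszony} this gives
\[ F_k(q_0^m) \le c_k (m\log q_0)^{k/2}\, H_k(q_0)^m. \]
So it suffices to find one $q_0$ and a set $A_0\subset\setZ$ with $|A_0-A_0|\ge q_0$ and $|kA_0|\le\delta q_0$ for some fixed $\delta<1$; for $m$ large the exponential factor $\delta^m$ beats the polynomial $(m\log q_0)^{k/2}$, giving $F_k(q_0^m)<\varepsilon q_0^m$ and the required $A\subset\setZ_{q_0^m}$. (Alternatively, one can work with the $G_k$-version via \eqref{Gsub} and \eqref{FG}, at the cost of requiring $A_0-A_0$ to contain an interval.)

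\textbf{Base construction.} To produce such an $A_0$, the plan is to iterate $B\mapsto B+S$, where $S\ni 0$ is a shift set whose nonzero elements are spaced so widely that translates in $kS$ and in $S-S$ produce no collisions; one then gets
\[ |k(B+S)| = |kB|\,|kS|, \qquad |(B+S)-(B+S)| = |B-B|\,|S-S|. \]
Starting from $B_0=\{0\}$ and iterating with shift sets $S_1,\dots,S_n$ (at geometrically increasing scales to maintain the spacing hypothesis), one obtains
\[ \frac{|kA_0|}{|A_0-A_0|} = \prod_{i=1}^n \frac{|kS_i|}{|S_i-S_i|}. \]
If each factor is strictly less than $1$, the product can be driven below any target $\delta$; the ratio $|kA_0|/|A_0-A_0|<\delta$ then translates to $|kA_0|<\delta q_0$ with $q_0=|A_0-A_0|$.

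\textbf{Main obstacle.} The crux is the existence of a shift set $S$ with $|kS|<|S-S|$. For $k=1$ any Sidon set of size $\ge 2$ works, and for $k=2$ the Sidon set $\{0,1,3\}$ already gives $|2S|=6<7=|S-S|$, so iteration suffices. For $k\ge 3$, however, no straightforward construction succeeds: dissociated sets have $|kS|\sim n^k/k!$, vastly exceeding $|S-S|\sim n^2$, while generalized arithmetic progressions of dimension $d$ have $|kS|/|S-S|\to (k/2)^d\ge 1$, and single arithmetic progressions have ratio $\to k/2\ge 1$. Overcoming this is the real content of the argument. It seems to require a recursive construction in which shift sets are themselves built from previously obtained examples of the same type, a bootstrapping that naturally produces the slow (tower-type) decay rate on $\alpha_k$ indicated after Theorem~\ref{main}.
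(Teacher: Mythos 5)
There is a genuine gap, and you name it yourself: the whole burden of the lemma has been pushed onto the ``base construction'' step, which you do not carry out.

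Your reduction via \eqref{Hsub} and Lemma~\ref{FHviszony} is correct and legitimate (there is no circularity: Lemma~\ref{FHviszony} is proved independently in Section~\ref{fir}). It correctly shows that producing a single $q_0$ with $H_k(q_0)\le\delta q_0$ for some $\delta<1$ would suffice. But notice that this single base instance already gives $\alpha_k\le\log(\delta q_0)/\log q_0<1$, i.e.\ it is equivalent in strength to the hard half of Theorem~\ref{main}. So the reduction does not actually make the problem easier; it only repackages it. Your proposed ``base construction'' then asks for a finite $S\subset\setZ$ with $|kS|<|S-S|$, which you observe fails for $k\ge3$ for all the usual candidates (dissociated sets, GAPs, progressions). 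The sentence ``It seems to require a recursive construction\,\dots'' is an accurate diagnosis, but it is not an argument: nothing in the proposal explains how to bootstrap, what to recurse on, or why the recursion would terminate with a ratio below $1$. As written, the proof does not close.

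The paper's actual construction is quite different and avoids this impasse entirely. It does not try to beat the difference set in a product/iterated-sumset sense. Instead it fixes the difference set from the start by parameterizing
\[
A=\{\varphi(x),\ x+\varphi(x):x\in\setZ_q\}
\]
for a function $\varphi:\setZ_q\to\setZ_q$; this makes $A-A=\setZ_q$ automatic, so one never needs a set with small $|kS|/|S-S|$ ratio as a seed. The $k$-fold sums of $A$ are then expressions $\sum_x\bigl(u(x)\varphi(x)+v(x)(x+\varphi(x))\bigr)$ with $\sum(u+v)=k$, and the recursion runs on the \emph{level} $\#\{x:u(x)+v(x)>0\}$ of such a representation, not on iterated shift sets. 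At each stage one enlarges $q$ by a product of new primes and defines additional coordinates of $\varphi$ so that any representation whose level actually increases is forced to have a vanishing coordinate, hence lands in a thin set. This genuinely bootstraps (one prime per ``bad'' pair $(u,v)$), which is where the tower-type bound comes from. In short, you have reproduced the paper's amplification machinery, but the heart of the lemma --- the explicit construction --- is missing, and the heuristic you offer (small $|kS|/|S-S|$ shift sets) is not the route the paper takes and, for $k\ge 3$, is not known to be viable on its own.
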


\begin{proof}
  We shall describe our set in the form
   \[ A = \{ \varphi(x), x+\varphi(x): x\in\setZ_q \} \]
   via a function $\varphi: \setZ_q \to \setZ_q$. This guarantees $A-A=\setZ_q$.

   The set $kA$ is the collection of all elements of the form
   \begin{equation}\label{uv}
\sum_{x\in\setZ_q} \Big( u(x) \varphi(x) + v(x) \big(x+\varphi(x) \big)\Big) , \end{equation}
    where $u,v$ are nonnegative integer-valued functions on $\setZ_q$, satisfying
     \[ \sum_{x\in\setZ_q} \big( u(x)  + v(x) \big) =k. \]
     We define the \emph{level} of such a pair $(u,v)$ of functions as
      \[ l(u,v) = \#\{ x: u(x)+v(x)>0 \}  . \]
Clearly $1 \leq l(u,v) \leq k$.

For a function $\varphi$ and $1\leq m\leq k$, let $S_m(\varphi)$ denote the set of elements that have a representation
of the form \eq{uv}  with  $l(u,v) \leq m$ (in particular, $S_k(\varphi)=kA$). The construction will proceed recursively. First we
show how to find a modulus and a function such that $|S_1(\varphi)| <\delta q$. Next we show that, given two numbers
$\delta, \delta'$ such that $0<\delta<\delta'$,  a modulus and a function such that $|S_m(\varphi)| <\delta q$, we can find 
a modulus $q'$ and a corresponding function $\varphi'$ such that $|S_{m+1}(\varphi')| <\delta'q'$.

For the first step we will take a product of $k+1$ different primes, $q=p_0 \ldots p_k$ and identify $\setZ_q$ with the direct product
$\setZ_{p_0} \times \ldots \times \setZ_{p_k}$. We shall write elements of  $\setZ_q$ as vectors, $\xv=(x_0, \ldots, x_k)$, $x_i\in\setZ_{p_i}$. A pair $(u,v)$ of level 1
is supported by a single element $\xv$; necessarily $v(\xv)=k-u(\xv)$.  Hence elements of $S_1(\varphi)$ are of the form
 \[ u(\xv) \varphi(\xv) + \big(k-u(\xv)\big) \big(\xv+\varphi(\xv) \big) = k\varphi(\xv)+  \big(k-u(\xv)\big)\xv.\]
We will achieve that whenever $u(\xv)=j$, the $j$'th coordinate of this sum will vanish.
To this end we put
\[ \varphi (x_0, \ldots, x_k) = \left( -x_0, \frac{1-k}{k}x_1, \ldots,\frac{j-k}{k}x_j, \ldots, \frac{-1}{k}x_{k-1}, 0 \right). \]
Here division in the $j$'th coordinate is meant modulo $p_j$, and in order that this make sense we assume $p_j>k$
for all $j$.

The number of elements where the $j$'th coordinate vanishes is exactly $q/p_j$, consequently we have
 \[ |S_1(\varphi)| \leq q \sum \frac{1}{p_j} < \delta q \]
if we select primes so that $p_j>(k+1)/\delta$.
 
For the inductive step, assume that for some $1\leq m<k$ we are given two numbers
$\delta, \delta'$ such that $0<\delta<\delta'$,  a modulus $q$ and a function such that $|S_m(\varphi)| <\delta q$. We shall construct
a modulus $q'$ and a corresponding function $\varphi'$ such that $|S_{m+1}(\varphi')| <\delta'q'$.

Let $t$ be the number of pairs $(u,v)$ of level $m+1$ on $\setZ_q$. Our new number will be of the form
 \[ q' = q p_1 p_2 \ldots p_t, \]
with distinct primes $p_j$, not dividing $q$.  We identify $\setZ_{q'}$ with the direct product
$\setZ_q \times \setZ_{p_1} \times \ldots \times \setZ_{p_t}$. We shall write elements of $\setZ_{q'}$   as vectors, $\xv=(x_0,x_1 \ldots, x_t)$, $x_0\in\setZ_q$, $x_i\in\setZ_{p_i}$ for $i>0$.
The function $\varphi'$ will also be defined coordinatewise, as
 \[ \varphi'(\xv) = \big( \varphi_0(\xv), \ldots, \varphi_t(\xv) \big) . \]
We put $ \varphi_0(\xv) = \varphi(x_0)$. 

Given a pair $(u', v')$ on  $\setZ_q'$, we define its \emph{shadow} on $\setZ_q$ by the formula
 \[ u(x) = \sum_{x_1, \ldots, x_t} u'(x, x_1 \ldots, x_t), \   v(x) = \sum_{x_1, \ldots, x_t}  v'(x, x_1 \ldots, x_t) .     \]
Clearly  the level of $(u, v)$ does not exceed the level of $(u', v')$. 

 Elements of $S_{m+1}(\varphi')$ are of the form
 \begin{equation}\label{uv'}
\sum_{\xv\in\setZ_q'} \Big( u'(\xv) \varphi'(\xv) + v'(\xv) \big(\xv+\varphi'(\xv) \big)\Big) , \end{equation}
with pairs $(u', v')$ of level at most $m+1$. The 0'th coordinate of this sum is exactly
\[ \sum_{x\in\setZ_q} \Big( u(x) \varphi(x) + v(x) \big(x+\varphi(x) \big)\Big) , \]
where $(u,v)$ is the shadow of $(u',v')$. In particular, if the level of $(u,v)$ is at most $m$,
then the 0'th coordinate is an element of $S_m(\varphi)$.

Now we consider the case when the level of $(u,v)$, as well as of $(u',v')$, is $m+1$. Let
$(u_1, v_1), \ldots, (u_t, v_t)$ be a list of all pairs $(u,v)$ of level $m+1$. We shall achieve that
the $j$'th coordinate vanish whenever the shadow of $(u', v')$ is $(u_j, v_j)$.

Observe that the level of a pair $(u',v')$ and that of its shadow can be equal only if there is no coincidence
among the 0'th coordinate of those elements for which $u'(\xv)+v'(\xv)>0$; the sum defining the shadow has 
always at most one nonzero term. Consequently for all $\xv=(x_0,x_1 \ldots, x_t)$ either  $\bigl(u'(\xv), v'(\xv)\bigr)= (0,0)$
or  $\bigl(u'(\xv), v'(\xv)\bigr)= \bigl(u_j(x_0), v_j(x_0) \bigr)$. Thus all nonzero terms in the sum \eq{uv'} are of the form
 \[   u_j(x_0)  \varphi'(\xv) +   v_j(x_0)  \big(\xv+\varphi'(\xv) \big)  . \]
The $j$'th coordinate of this summand is
 \[   u_j(x_0)  \varphi_j(\xv) +   v_j(x_0)  \big(x_j+\varphi_j(\xv) \big)  . \]
 This will vanish if we define
  \[ \varphi_j(\xv) =
  \begin{cases}
    - \frac{v_j(x_0)}{u_j(x_0) + v_j(x_0)} & \text{ if } {u_j(x_0) + v_j(x_0)}>0, \\
    0  & \text{ if } {u_j(x_0) + v_j(x_0)}=0,
  \end{cases} \]
the division being understood modulo $p_j$.

This construction ensures that either the 0'th coordinate is in $S_m(\varphi)$ or another coordinate vanishes. Hence
 \[ \frac{ |S_{m+1}(\varphi')|}{q'} \leq  \frac{ |S_{m}(\varphi)|}{q} + \sum_{j=1}^t \frac{1}{p_j} < \delta +   \sum_{j=1}^t \frac{1}{p_j} < \delta',    \]
if we choose primes satisfying $p_j>t/(\delta'-\delta)$.

To prove the Lemma we start with $\delta= \varepsilon/(k+1)$ and proceed by finding moduli and functions with
$|S_m(\varphi)|/q<(m+1)\varepsilon/(k+1)$. After $k$ steps we have the desired bound for the size of $S_k(\varphi)=kA$.
\end{proof}

\begin{Rem}
  For the initial step I know several constructions, some of which yield smaller values of $q$; I chose this one
because it anticipates the inductive step.
\end{Rem}

\begin{proof}[Proof of Theorem \ref{main}.]
 
By virtue of Theorem \ref{mindegy}, to prove the upper bound it is sufficient to find 
\emph{a single $q$} such that $G_k(q)<q$; and by inequality \eq{GF}, it suffices to find 
a single $q$ such that $F_k(q)< q/k$, which is the previous lemma with $\varepsilon=1/k$.

To demonstrate the lower bound we show that for any finite set in any group we have
 \[ |kA| \geq |A-A|^{1- 2^{-k}} . \]
   We use induction on $k$. The case $k=1$ is evident. To go from $k$ to $k+1$ we use the inequality \cite{r78a}
(see also \cite{nathanson96},\cite{taovu06},\cite{r09c})
    \[  |X| |Y-Z| \leq |Y-X| |Y-Z|  \]
with $Y=Z=A$, $X=-kA$.
\end{proof}

\section{From integers to residues} \label{fir}

In this section we prove Lemma \ref{FHviszony}.

We start with an arbitrary set $A$ of integers, and in several steps we turn it into a set of residues modulo $q$.
Our tool will be the following projection-like transformation, which depends on a real parameter $t$:
 \[ \pi_t(n) =  [q\{ tn \} ]. \]
(We suppress the parameter $q$, which will be fixed through the section.) The values of $\pi_t$ are integers in $[1,q)$, and  $\pi_t$ 
has a quasi-additivity property:
 \[   \pi_t(x+y) =   \pi_t(x) + \pi_t(y) + r, \ r\in\{ 0,1, -q, 1-q \} . \]

\begin{Lemma}\label{vetites}
  Let $S$ be a set of integers, $|S| =q$. There is a $t\in(0,1)$ such that
   \[ | \pi_t(S)| > q/3 .\]
\end{Lemma}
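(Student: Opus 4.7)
The plan is a second-moment argument: average the collision count among the values $\pi_t(n)$, $n \in S$, over a random $t \in (0,1)$, and use Cauchy--Schwarz to convert a small collision count into a large image.

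First, fix $t$ and write $X_j(t) = \#\{n \in S : \pi_t(n) = j\}$ for $0 \leq j < q$, so that $\sum_j X_j(t) = |S| = q$. By Cauchy--Schwarz applied on the support,
\[ q^2 = \Bigl(\sum_j X_j(t)\Bigr)^2 \leq |\pi_t(S)| \cdot \sum_j X_j(t)^2, \]
so it suffices to find a $t$ for which the collision count
\[ C(t) := \sum_j X_j(t)^2 = \#\bigl\{ (n_1,n_2) \in S\times S : \pi_t(n_1) = \pi_t(n_2) \bigr\} \]
satisfies $C(t) < 3q$.

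The key step is to exploit the quasi-additivity of $\pi_t$ to reduce a pairwise collision to a one-variable event in $d = n_1 - n_2$. Writing $\pi_t(n_1) = \pi_t(n_2) + \pi_t(d) + r$ with $r \in \{0,1,-q,1-q\}$ and noting $\pi_t(d) \in \{0,1,\ldots,q-1\}$, the equality $\pi_t(n_1) = \pi_t(n_2)$ forces $\pi_t(d) \in \{0, q-1\}$, equivalently $\{td\} \in [0,1/q) \cup [1-1/q,1)$. For any fixed nonzero integer $d$, as $t$ ranges uniformly over $(0,1)$ the fractional part $\{td\}$ is uniformly distributed, so this event has Lebesgue measure exactly $2/q$.

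Averaging over $t$ then gives
\[ \int_0^1 C(t)\,dt \leq q + q(q-1)\cdot \frac{2}{q} = 3q - 2, \]
the $q$ coming from the diagonal pairs $n_1 = n_2$. Consequently there exists $t \in (0,1)$ with $C(t) \leq 3q-2$, and for this $t$ the Cauchy--Schwarz bound yields $|\pi_t(S)| \geq q^2/(3q-2) > q/3$, as required. The only mildly subtle point is extracting the correct collision reduction from the four-value quasi-additivity relation; everything else is routine averaging.
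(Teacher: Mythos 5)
Your argument is correct and matches the paper's proof in all essentials: a random choice of $t$, a $2/q$ collision bound for each off-diagonal pair, and a Cauchy--Schwarz step (the paper phrases it as the inequality of arithmetic and square means) to pass from the collision count to the size of the image. The only cosmetic difference is that you derive the per-pair collision condition by substituting into the quasi-additivity relation to force $\pi_t(n_1-n_2)\in\{0,q-1\}$, while the paper observes directly that $\pi_t(m)=\pi_t(n)$ forces $|\{tm\}-\{tn\}|<1/q$; both routes give the same $2/q$ bound.
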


\begin{proof}
  We select $t\in[0,1)$ randomly with uniform distribution. Let $z$ be the number of pairs $m,n\in S$
such that $  \pi_t(m) = \pi_t(n) $. For a fixed pair $(m,n)$ the probability that $  \pi_t(m) = \pi_t(n) $ is 1 if $m=n$,
and at most $2/q$ if $m \neq n$. To see the latter claim note that if  $  \pi_t(m) = \pi_t(n) $, then
\[ | \{ tm \} - \{ tn\}  | < 1/q, \]
hence $ \| t(m-n) \| <2/q$, which has probability $2/q$. Hence the expectation of $z$ is
 \[ \leq q + \frac{2}{q} q(q-1) < 3q .\]
Select any $t$ for which $z<3q$. For an integer $j\in[0,q)$ let $r_j$ be the number of integers $n\in S$ such that
$\pi_t(n) =j$. The inequality of arithmetic and square means yields
 \[ z = \sum r_j^2 \geq \frac{ \left(  \sum r_j \right)^2 }{ | \pi_t(S)|} =   \frac{ q^2 }{ | \pi_t(S)|} ,  \]
hence $ | \pi_t(S)| \geq q^2/z > q/3$ as wanted.
\end{proof}

\begin{Lemma}\label{Lorenz}
Let $A\subset\setZ_q$ be a nonempty set, $|A| \geq  tq$, $0<t<1$, and let $k$ be a positive integer. There are sets $B_1, \ldots, B_k \subset\setZ_q$
such that
 \[ A+B_1 + \ldots + B_k =  \setZ_q \]
 and
  \[ |B_i| \leq m = \left\lceil \left( \frac{\log q}{t} \right)^{1/k}  \right\rceil . \]
\end{Lemma}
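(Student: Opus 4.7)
The plan is to build the $B_i$'s greedily, one at a time, so that the sequence $A_0 = A \subseteq A_1 \subseteq A_2 \subseteq \cdots \subseteq A_k$ with $A_{j+1} = A_j + B_{j+1}$ has uncovered fraction $u_j := 1 - |A_j|/q$ shrinking doubly exponentially in $j$. The key fact is a one-step covering bound: for any $A' \subseteq \setZ_q$ with density $t' = |A'|/q$ and any integer $m \geq 1$, there is a set $B \subseteq \setZ_q$ with $|B| \leq m$ such that
\[ \bigl|\setZ_q \setminus (A'+B)\bigr| \;\leq\; q(1-t')^m . \]
This is the standard first-moment argument: pick $b_1,\dots,b_m$ independently and uniformly in $\setZ_q$; for each $x$, the event $x \notin A' + \{b_1,\dots,b_m\}$ means $x - b_i \notin A'$ for every $i$, which has probability $(1-t')^m$. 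Summing over $x \in \setZ_q$ and using Markov, some realization achieves the claimed bound, with $B = \{b_1,\dots,b_m\}$ (possibly with repetitions, which only decreases $|B|$; we may also force $0 \in B$ so that $A_j \subseteq A_{j+1}$).

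Next I would iterate this step $k$ times, starting from $A_0 = A$, $u_0 = 1 - t_0 \leq 1-t$. At stage $j$ the lemma produces $B_{j+1}$ with $|B_{j+1}| \leq m$ and $u_{j+1} \leq u_j^m$. By induction
\[ u_k \;\leq\; u_0^{m^k} \;\leq\; (1-t)^{m^k} \;\leq\; e^{-t m^k}. \]
Choosing $m = \bigl\lceil (\log q / t)^{1/k} \bigr\rceil$ gives $t m^k \geq \log q$, hence $q u_k \leq 1$; i.e., at most one element of $\setZ_q$ remains uncovered after the $k$-th step. In that boundary case one either sharpens the final step (the expected uncovered count is $q(1-t_{k-1})^m \leq 1$, so some realization yields zero uncovered points, since the count is a non-negative integer whose expectation is at most $1$ — and a simple perturbation of $B_k$ removes the one possible leftover) or enlarges one of the $B_i$ by adjoining the missing element, which does not violate the size bound $|B_i| \leq m$ when $m \geq 2$; the edge case $m = 1$ forces $t \geq \log q$, so $q$ is bounded and the lemma is trivial.

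The main potential obstacle is exactly this last point: passing from ``density arbitrarily close to $1$'' to ``equal to $\setZ_q$''. But since the greedy bound is on an integer-valued quantity whose expectation is at most $1$, this is essentially automatic, and the geometry is the same as for the $k=1$ case, which just recovers the classical logarithmic covering lemma. The heart of the argument is the super-multiplicative iteration $u_{j+1} \leq u_j^m$, which is what converts a linear dependence on $\log q / t$ (for $k=1$) into the $k$-th-root dependence $(\log q / t)^{1/k}$ asserted in the lemma.
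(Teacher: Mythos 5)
Your argument is the same iterated first-moment covering argument as in the paper: pick a random $m$-element translate set, bound the expected number of uncovered points by $(1-t')^m q$, fix a good realization, and iterate so that the uncovered density obeys $u_{j+1}\le u_j^m$, hence $u_k\le(1-t)^{m^k}\le e^{-tm^k}$. The only substantive difference is that you sample $b_1,\dots,b_m$ i.i.d.\ uniformly (with replacement), which gives the non-strict bound $(1-t')^m$, whereas the paper samples a uniform $m$-element subset, giving the strictly sharper
\[ \binom{q-|A'|}{m}\Big/\binom{q}{m} \;<\;(1-t')^m \qquad(m\ge 2,\ A'\neq\emptyset,\ A'\neq\setZ_q).\]
That one strict inequality is exactly what you are missing. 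It propagates to $u_k< e^{-tm^k}\le 1/q$, forcing $qu_k<1$ and hence $u_k=0$ with no boundary case to patch. (The degenerate case $m=1$ only occurs when $A=\setZ_q$ already, so it never matters.)

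Your edge-case repairs, as written, do not quite work. ``The count is a non-negative integer whose expectation is at most $1$, so some realization is zero'' is false in general: a random variable identically equal to $1$ has expectation $1$. And ``enlarge one of the $B_i$ by adjoining the missing element'' would make $|B_i|=m+1$, violating the required bound $|B_i|\le m$; the remark that this is fine ``when $m\ge2$'' does not follow. The simplest way to make your version airtight is to replace i.i.d.\ sampling by a uniformly random $m$-subset, i.e.\ exactly what the paper does. Alternatively, note that if $u_{k-1}>0$ then $A_{k-1}$ is a proper nonempty subset, and with-replacement sampling still gives a strict inequality for $m\ge 2$ once you also condition out the positive-probability event of a repeated $b_i$; but this is fussier than just switching to without-replacement sampling. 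Aside from this technicality, your proof and the paper's are the same.
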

\begin{proof}
Select $B_1$ randomly, with equal probability from all ${q \choose m}$ $m$-element subsets of $\setZ_q$. The probability that an
element of  $\setZ_q$ is not in $A+B_1$ is
 \[ {q-|A| \choose m} \Big/ {q \choose m} <(1-t)^m  .         \]
 Hence the expectation
of $ \left| \setZ_q \setminus (A+B_1) \right|$ is  $<(1-t)^mq$. Fix $B_1$ so that
 \[ \left| \setZ_q \setminus (A+B_1) \right| <(1-t)^mq . \]
Now repeat the process with $A+B_1$ in the place of $A$ to find $B_2$, and so on. After $k$ steps the number of elements 
outside $ A+B_1 + \ldots + B_k$ will be
 \[ <(1-t)^{m^k} q < e^{-tm^k} q < 1. \]
\end{proof}

The case $k=1$ of this lemma is  a theorem of Lorentz \cite{lorentz53} (see also \cite{sequences}).

\begin{proof}[Proof of  Lemma \ref{FHviszony}.]

Let $A$ be a set of integers such that $|A-A| \geq q$ and $|kA| = H_k(q)$. Put $A_1=\pi_t(A)$ with a number  $t$ such that
$| \pi_t(A-A)| > q/3$. The quasi-additivity property implies that
 \[ \pi_t(A-A)  \subset (A_1-A_1) + \{ 0,-1, q, q-1 \} . \]
Let $A_2\subset\setZ_q$ be the image of $A_1$. The above inclusion shows that $(A_2-A_2) + \{ 0,-1\} $ contains the image of $ \pi_t(A-A)$,
hence
 \[|A_2-A_2|  \geq | \pi_t(A-A)|/2 > q/6. \]
 Similarly $kA_2$ is contained in the image of  $ \pi_t(kA) + \{ 0, 1, \ldots, k-1 \}$, hence
  \[ | kA_2 | \leq k |kA|  = kH_k(q).\]
By Lemma \ref{Lorenz}, applied to the set $A_2$, there are sets $B_1, B_2\subset\setZ_q$ such that  $(A_2-A_2)+B_1+B_2=\setZ_q$ and $|B_i| < c \sqrt{\log q}$.
 Our set will be
$A_3 = A_2+(B_1 \cup -B_2)$. This set satisfies $A_3-A_3=\setZ_q$ and
 \[ |kA_3| \leq |kA_2| |k(B_1 \cup -B_2)   | < ( c \log q)^{k/2} |kA_2| \leq k( c \log q)^{k/2}  H_k(q). \]
\end{proof}

\section{The other side}\label{os}

In this section we  consider the  question about the maximal possible size of
$kA$ compared to $A-A$. Most results and proofs are completely analogous, and we shall not give details.

 For positive integers $k$ and $q$, $q>1$ write
 \[ f_k(q) = \min \{ |A-A|: A\subset \setZ_q, kA = \setZ_q \}, \]
\[ g_k(q) = \min \big\{ |A-A|: A\subset \setZ, kA \supset \{a+1, \ldots, a+q \} \text{ for some } a \big\}, \]
\[ h_k(q) = \min \{ |A-A|: A\subset \setZ, |kA| \geq q  \}. \]

Put
 \[ \beta_k = \inf_{q\geq 2} \frac{\log g_k(q)}{ \log q} . \]

 \begin{Th}\label{mindegy'}
   \[ \lim \frac{\log f_k(q)}{ \log q} =  \lim \frac{\log g_k(q)}{ \log q} =  \lim \frac{\log h_k(q)}{ \log q}  \]
    \[ =  \inf_{q\geq 2} \frac{\log h_k(q)}{ \log q} = \beta_k. \]
 \end{Th}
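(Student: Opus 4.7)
The plan is to mirror the proof of Theorem \ref{mindegy} with the roles of $A-A$ and $kA$ exchanged. The scaffolding requires lowercase analogs of Lemmas \ref{monoton}, \ref{submult}, \ref{viszony}, and \ref{FHviszony}, after which the outer bookkeeping carries over verbatim. Monotonicity of $g_k,h_k$ is immediate, and submultiplicativity uses the same product/dilation constructions: $A=A_1\times A_2$ in $\setZ_{q_1}\times\setZ_{q_2}\cong\setZ_q$ for coprime $q_1,q_2$ (for $f_k$); $A=A_1+q_1\cdot A_2$ for $g_k$ (since $kA_1+q_1\cdot kA_2$ contains $q_1q_2$ consecutive integers); and $A=A_1+m\cdot A_2$ with $m$ large for $h_k$. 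Fekete then yields $\lim\log g_k(q)/\log q=\beta_k$ and $\lim\log h_k(q)/\log q=\inf\log h_k(q)/\log q$.

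Next, $f_k(q)\le g_k(q)$ by reducing a $g_k$-witness modulo $q$ (the $q$ consecutive integers in $kA$ fill $\setZ_q$), and $h_k(q)\le g_k(q)$ trivially. For the reverse direction $g_k(q)\le 6f_k(q)$, let $A\subset\setZ_q$ realize $f_k(q)$, lift to $A_0\subset[0,q)\cap\setZ$, and set $A'=A_0\cup(A_0+q)$. Then $kA'=kA_0+q\cdot\{0,1,\ldots,k\}$; since the reduction of $kA_0$ modulo $q$ is all of $\setZ_q$, every $n\in[kq,(k+1)q)$ can be written $m+jq$ with $m\in kA_0$ and $j\in\{1,\ldots,k\}$, so $kA'$ contains $q$ consecutive integers. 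Using $A'-A'\subset(A_0-A_0)+\{-q,0,q\}$ and $|A_0-A_0|\le 2f_k(q)$ gives $|A'-A'|\le 6f_k(q)$.

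The substantive step is the analog of Lemma \ref{FHviszony}: $f_k(q)\le c_k(\log q)^{2/k}h_k(q)$. Start with $A\subset\setZ$ satisfying $|kA|\ge q$ and $|A-A|=h_k(q)$. Apply Lemma \ref{vetites} to a $q$-subset of $kA$ to obtain $t$ with $|\pi_t(kA)|>q/3$, and set $A_2=\pi_t(A)\subset\setZ_q$. Iterated quasi-additivity gives $\pi_t(kA)\subset kA_2+\{0,1,\ldots,k-1\}$ in $\setZ_q$, so $|kA_2|>q/(3k)$; similarly $|A_2-A_2|\le 3h_k(q)$. Apply Lemma \ref{Lorenz} with $k$ covering sets to $kA_2$: there exist $B_1,\ldots,B_k\subset\setZ_q$, each of size $O_k((\log q)^{1/k})$, with $kA_2+B_1+\ldots+B_k=\setZ_q$. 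Setting $A_3=A_2+(B_1\cup\ldots\cup B_k)$, the $k$-fold sum $kA_3$ contains $kA_2+B_1+\ldots+B_k=\setZ_q$ by picking one element from each $B_i$, while $|A_3-A_3|\le|A_2-A_2|\cdot|B'-B'|\le c_k(\log q)^{2/k}h_k(q)$.

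Combining these inequalities as in Theorem \ref{mindegy}---$f_k,h_k\le g_k$ in one direction, $g_k\le 6f_k\le c_k'(\log q)^{2/k}h_k$ in the other---gives $\lim\log f_k(q)/\log q=\lim\log h_k(q)/\log q=\beta_k$. The main obstacle is the Lemma \ref{FHviszony} analog: whereas the original absorbs the pair $(B_1,-B_2)$ into one set so that the difference $A_3-A_3$ swallows $B_1+B_2$, here the entire tuple $B_1,\ldots,B_k$ must collapse into $B'=\bigcup_i B_i$ so that $k$ repeated copies of $B'$ swallow $B_1+\ldots+B_k$ inside $kA_3$; this is precisely why Lemma \ref{Lorenz} is invoked with $k$ covering sets, and why the polylog exponent is $2/k$ rather than $k/2$.
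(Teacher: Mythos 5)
Your proposal is correct and follows exactly the route the paper indicates (the paper merely asserts that Lemmas \ref{monoton'}--\ref{fhviszony} are ``completely analogous'' to Lemmas \ref{monoton}--\ref{FHviszony} and leaves the details implicit). You carry out the analogy carefully, including the two genuinely non-mechanical adaptations: showing $kA'=kA_0+q\cdot\{0,\ldots,k\}$ contains $q$ consecutive integers in the proof of $g_k(q)\le C f_k(q)$, and invoking Lemma \ref{Lorenz} with $k$ covering sets (rather than two) so that $kA_3\supset kA_2+B_1+\cdots+B_k=\setZ_q$, which is what produces the $(\log q)^{2/k}$ loss in place of $(\log q)^{k/2}$; the slightly larger absolute constants you obtain ($6$ instead of $4$, etc.) are immaterial.
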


Again, I cannot decide whether
 \[  \inf_{q\geq 2} \frac{\log f_k(q)}{ \log q} = \beta_k. \]

The proof of this result proceeds through analogues of Lemmas \ref{monoton},  \ref{submult}, \ref{viszony}, \ref{FHviszony}.

\begin{Lemma}\label{monoton'} (Monotonicity.)
  If $q<q'$, then
   \[ g_k(q) \leq g_k(q'), \]
    \[ h_k(q) \leq h_k(q'). \]
\end{Lemma}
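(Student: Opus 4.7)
The plan is to observe that this statement is the exact mirror of Lemma~\ref{monoton}, and the proof is equally immediate: in each case, the defining condition for the larger parameter $q'$ is strictly stronger than that for $q$, so the set of admissible $A$'s shrinks as the parameter grows, forcing the minimum to be (weakly) monotone increasing.

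For $h_k$, suppose $A\subset\setZ$ achieves $h_k(q')$, so $|kA|\geq q'$ and $|A-A|=h_k(q')$. Since $q<q'$, we have $|kA|\geq q'>q$, so $A$ is also admissible in the definition of $h_k(q)$. Taking the minimum over the (larger) admissible class for $q$ yields $h_k(q)\leq |A-A|=h_k(q')$.

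For $g_k$, suppose $A\subset\setZ$ achieves $g_k(q')$, so $kA\supset\{a+1,\ldots,a+q'\}$ for some $a$. Since $q<q'$, the interval $\{a+1,\ldots,a+q\}$ is a subset of $\{a+1,\ldots,a+q'\}$, hence $A$ also satisfies the defining inclusion for $g_k(q)$ with the same $a$. Therefore $g_k(q)\leq |A-A|=g_k(q')$.

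There is no real obstacle here; the argument is a one-line observation in each case, and I would present it in the paper essentially as a remark (or with the parenthetical ``obvious, but important'' used for Lemma~\ref{monoton}). One might briefly note that the analogous question about monotonicity of $f_k$ is presumably again open, just as for $F_k$, since the constraint $kA=\setZ_q$ depends on the multiplicative structure of $q$ and there is no canonical way to transfer an admissible set from modulus $q'$ to modulus $q<q'$.
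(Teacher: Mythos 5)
Your proof is correct and matches the paper's approach exactly: the paper labels the mirror Lemma~\ref{monoton} as ``(Obvious, but important.)'' and for Lemma~\ref{monoton'} merely notes that the proofs are ``completely analogous,'' relying on precisely the nesting of admissible sets you spell out. Your final remark about $f_k$ also matches the paper, which raises the same open question and conjecture for $f_k$ as for $F_k$.
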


\begin{?}
Is $f_k$ monotonically increasing?
 \end{?}

 \begin{Conj}
 No. Probably it depends on the multiplicative structure of $q$, not just its size.  
 \end{Conj}

\begin{Lemma}\label{submult'}(Submultiplicativity.)
Let $q=q_1 q_2$. We have
\begin{equation}\label{fsub}
f_k(q) \leq f_k(q_1) f_k(q_2) \text{ if } \gcd (q_1,q_2)=1, \end{equation}
\begin{equation}\label{gsub}
g_k(q) \leq g_k(q_1) g_k(q_2) \text{ always,}  \end{equation}
\begin{equation}\label{hsub}
h_k(q) \leq h_k(q_1) h_k(q_2) \text{ always.}  \end{equation}
\end{Lemma}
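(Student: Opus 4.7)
The plan is to mimic the proof of Lemma \ref{submult} using the same three constructions; only the roles of $kA$ and $A-A$ get swapped, so we now use the construction to verify that $kA$ is large while bounding $|A-A|$ from above. In each case let $A_1, A_2$ be extremal sets attaining the value of the relevant function at $q_1$ and $q_2$.

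For \eq{fsub}, identify $\setZ_q$ with $\setZ_{q_1} \times \setZ_{q_2}$ via the Chinese Remainder Theorem and take $A = A_1 \times A_2$. Then $kA = kA_1 \times kA_2 = \setZ_{q_1} \times \setZ_{q_2} = \setZ_q$, while $A - A = (A_1-A_1) \times (A_2-A_2)$, so $|A-A| = f_k(q_1) f_k(q_2)$, giving the inequality.

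For \eq{gsub}, take $A = A_1 + q_1 \cdot A_2$. Then $kA \supset kA_1 + q_1 \cdot kA_2$, and since $kA_i$ contains $q_i$ consecutive integers, the sum contains an arithmetic progression of $q_2$ blocks of $q_1$ consecutive integers spaced exactly $q_1$ apart, which tile into a single interval of $q_1 q_2$ consecutive integers. On the other hand $A - A \subset (A_1 - A_1) + q_1 \cdot (A_2 - A_2)$, whose cardinality is at most $|A_1 - A_1| \cdot |A_2 - A_2| = g_k(q_1) g_k(q_2)$, regardless of any collisions that might occur in the representations. For \eq{hsub}, take $A = A_1 + m \cdot A_2$ with $m$ chosen large enough that the representations are unique both in $A$ and in $kA$; then $|A-A| = |A_1-A_1||A_2-A_2| = h_k(q_1)h_k(q_2)$ and $|kA| = |kA_1||kA_2| \geq q_1 q_2$.

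The only point requiring a moment's care is the consecutive-integer computation in \eq{gsub}: writing each element of $kA_1$ occurring in its guaranteed block as $a_1 + i$ with $1 \leq i \leq q_1$, and similarly $a_2 + j$ with $1 \leq j \leq q_2$ for $kA_2$, the value $a_1 + i + q_1(a_2 + j)$ sweeps $a_1 + q_1 a_2 + m$ over all $m \in [q_1+1, q_1(q_2+1)]$. Everything else is a direct transposition of Lemma \ref{submult}, so I would present the proof exactly in parallel, with the single verification above spelled out.
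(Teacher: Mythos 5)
Your proof is correct and takes essentially the same approach as the paper has in mind: the paper leaves Lemma \ref{submult'} implicit, remarking only that it follows from analogues of Lemma \ref{submult}, and your argument is precisely that transposition, with $A=A_1\times A_2$ for \eq{fsub}, $A=A_1+q_1\cdot A_2$ for \eq{gsub}, and $A=A_1+m\cdot A_2$ with $m$ large for \eq{hsub}. Your explicit check that $i+q_1 j$ sweeps the interval $[q_1+1,\,q_1(q_2+1)]$ is the right piece of bookkeeping to spell out, and everything else transfers verbatim.
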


\begin{Lemma}\label{viszony'}
 For all $q$ we have
\begin{equation}\label{fg} f_k(q) \leq g_k(q), \end{equation}
\begin{equation}\label{hg} h_k(q) \leq g_k(q), \end{equation}
\begin{equation}\label{gf}  g_k(q) \leq g_k(2q+1) \leq 4 f_k(q). \end{equation}
\end{Lemma}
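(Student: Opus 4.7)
The plan is to mirror the proof of Lemma~\ref{viszony}, with the roles of the sumset $kA$ and the difference set $A-A$ interchanged. Inequalities \eqref{fg} and \eqref{hg} are trivial reductions: reducing a set $A\subset\setZ$ witnessing $g_k(q)$ modulo $q$ gives $\bar A\subset\setZ_q$ with $k\bar A=\setZ_q$ and $|\bar A-\bar A|\le|A-A|$, proving \eqref{fg}; any interval of length $q$ has size $q$, proving \eqref{hg}. The first half of \eqref{gf} is Lemma~\ref{monoton'}.

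For $g_k(2q+1)\le 4f_k(q)$, I would follow the construction used in the proof of Lemma~\ref{viszony}. Pick $A\subset\setZ_q$ with $kA=\setZ_q$ and $|A-A|=f_k(q)$, translate so that $0\in A$ (this preserves both $|A-A|$ and $kA=\setZ_q$), and set
\[
A'=\{n\in\setZ:-q<n\le q,\ n\bmod q\in A\}.
\]
Then $A'-A'\subset(-2q,2q)$, and each residue class modulo $q$ has at most four integer representatives in this range, so $|A'-A'|\le 4f_k(q)$. The remaining task is to show that $kA'$ contains $2q+1$ consecutive integers.

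Writing $\tilde A\subset[1,q]$ for the lift of $A$ that sends $0$ to $q$, so $A'=\tilde A\cup(\tilde A-q)$, every element of $kA'$ has the form $\sum_{i=1}^k a_i-jq$ with $a_i\in\tilde A$ and $j\in\{0,\dots,k\}$. For each residue $r\in[0,q-1]$ the hypothesis $kA=\setZ_q$ supplies a representation $\sum a_i=r+s(r)q$ with $s(r)\ge 0$, and moreover $s(r)\le k-1$ whenever $r\ge 1$. Taking $j=s(r)$ puts $r$ in $kA'$, and $j=s(r)+1$ (legal for $r\ge 1$) puts $r-q$ in $kA'$, yielding $[-q+1,q-1]\subset kA'$. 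Since $0,q\in A'$, the sum $q+0+\cdots+0$ shows $q\in kA'$, extending the interval to $[-q+1,q]$. Finally, for $r=1$ every representation $\sum a_i\equiv 1\pmod q$ with $a_i\in[1,q]$ satisfies $\sum a_i\ge k\ge 2$, forcing $\sum a_i\ge q+1$ and hence $s(1)\ge 1$; shifting $s(1)-1$ summands by $-q$ produces $q+1\in kA'$. Altogether $[-q+1,q+1]\subset kA'$, the required $2q+1$ consecutive integers.

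The main obstacle, absent from the proof of Lemma~\ref{viszony}, is that the symmetric interval $[-q,q]$ need not lie in $kA'$: producing $-q$ would require a non-trivial representation of $0\pmod q$ in $k\tilde A$, which may simply fail to exist. Trading the left extension $-q$ for the right extension $q+1$ sidesteps this, at the small cost of using $k\ge 2$; the degenerate case $k=1$ reduces to an elementary interval computation that I would dispatch separately.
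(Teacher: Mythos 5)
Your approach correctly mirrors the proof of Lemma~\ref{viszony}, which is what the paper intends but never spells out (it only says the proofs are ``completely analogous''). The key adaptation---shifting the target interval from the symmetric $[-q,q]$ to $[-q+1,q+1]$ because $-q\in kA'$ would require a nontrivial representation of $0$ mod $q$ among $k$ elements of the lift of $A$---is a genuine departure from the difference-set case, where the symmetry of $A'-A'$ makes $[-q,q]$ painless, and it is correct. The three auxiliary observations ($0,q\in A'$ after translating so that $0\in A$; $s(r)\le k-1$ for $r\ge1$; $s(1)\ge1$ when $k\ge2$) do deliver $[-q+1,q+1]\subset kA'$ for $k\ge 2$, and the count $|A'-A'|\le 4|A-A|$ is also right, since $A'-A'\subset(-2q,2q)$ and each residue class has at most four representatives there.

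The gap is your final sentence. For $k=1$ there is no elementary interval computation that rescues the stated inequality, because it is false: $f_1(q)=q$ (the only $A\subset\setZ_q$ with $1\cdot A=\setZ_q$ is $\setZ_q$ itself), while $g_1(N)=2N-1$ (a set whose $1$-fold sumset contains $N$ consecutive integers must itself contain them, whence $|A-A|\ge 2N-1$, with equality for an interval). Thus $g_1(2q+1)=4q+1>4q=4f_1(q)$. Your construction cannot even produce a candidate witness: for $k=1$ one has $kA'=A'\subset[-q+1,q]$, a set of at most $2q$ integers, which cannot contain $2q+1$ consecutive ones. So rather than promising a separate dispatch of $k=1$, you should record that inequality \eqref{gf} as printed, with the constant $4$ and no restriction on $k$, fails at $k=1$ and needs either the hypothesis $k\ge 2$ or a slightly larger constant. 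This is an oversight in the paper (which states but does not prove the lemma); it is harmless to Theorem~\ref{mindegy'}, which uses only the logarithmic order of magnitude, and $\beta_1=1$ trivially.
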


\begin{Lemma}\label{fhviszony}
\begin{equation}\label{fh} f_k(q) \leq c_k (\log q)^{2/k} h_k(q). \end{equation}
\end{Lemma}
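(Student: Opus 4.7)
The plan is to mirror the proof of Lemma~\ref{FHviszony}, interchanging the roles of $kA$ and $A-A$ and re-optimising the parameter of the Lorentz-type covering step. First I would take a set of integers $A$ realising $|kA|\geq q$ and $|A-A|=h_k(q)$. Choosing any $q$-element subset $S\subset kA$ and applying Lemma~\ref{vetites} to $S$ produces a real $t$ with $|\pi_t(kA)|\geq|\pi_t(S)|>q/3$. Let $A_2\subset\setZ_q$ be the reduction of $\pi_t(A)$ modulo $q$. The quasi-additivity of $\pi_t$, iterated $k$ times, yields
\[ \pi_t(kA)\bmod q\subset kA_2+\{0,1,\ldots,k-1\},\qquad A_2-A_2\subset \pi_t(A-A)+\{0,1\}\pmod q, \]
and therefore $|kA_2|>q/(3k)$ and $|A_2-A_2|\leq 2h_k(q)$.

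Next I would apply Lemma~\ref{Lorenz} to $kA_2\subset\setZ_q$ (of density at least $1/(3k)$), with its inner parameter also chosen to be $k$, obtaining sets $B_1,\ldots,B_k\subset\setZ_q$ with $kA_2+B_1+\cdots+B_k=\setZ_q$ and each $|B_i|\leq m:=\lceil(3k\log q)^{1/k}\rceil$. Put $C=B_1\cup\cdots\cup B_k$, so that $|C|\leq km$; picking one summand from each $B_i$ shows $kC\supset B_1+\cdots+B_k$. Setting $A_3=A_2+C$ then gives $kA_3=kA_2+kC\supset\setZ_q$ and $A_3-A_3\subset(A_2-A_2)+(C-C)$, whence
\[ f_k(q)\leq|A_3-A_3|\leq|A_2-A_2|\cdot|C|^2\leq 2h_k(q)\cdot k^2m^2\leq c_k(\log q)^{2/k}h_k(q), \]
which is the claim.

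Essentially no step is harder than the corresponding one in Lemma~\ref{FHviszony}; the only genuine design choice is how many covering sets to take. In Lemma~\ref{FHviszony} one pays $|k(B_1\cup-B_2)|\leq|C|^k$, so it is optimal to use just two covering sets, producing the exponent $k/2$. Here one instead pays $|C-C|\leq|C|^2$, so it is advantageous to use \emph{many} small covering sets, namely $k$ sets of size $\approx(\log q)^{1/k}$, giving $|C|^2\approx(\log q)^{2/k}$ and hence the exponent $2/k$. The one point to verify with care is the inclusion $kC\supset B_1+\cdots+B_k$, which is what allows $kA_3$ to absorb the full Lorentz covering; this is immediate from $B_i\subset C$ for every $i$, so I do not foresee any real obstacle beyond bookkeeping.
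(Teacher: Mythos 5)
Your proof is correct and is precisely the "completely analogous" argument the paper alludes to for Section~\ref{os} without spelling out: swap the roles of $kA$ and $A-A$ in the proof of Lemma~\ref{FHviszony}, project via Lemma~\ref{vetites} to get $A_2\subset\setZ_q$ with $|kA_2|\gg q/k$ and $|A_2-A_2|\leq 2h_k(q)$, then cover with Lemma~\ref{Lorenz}. You have also correctly identified the one genuinely non-automatic adaptation — using $k$ covering sets of size $\approx(\log q)^{1/k}$ rather than two, since $kC$ absorbs $B_1+\cdots+B_k$ when $C=\bigcup B_i$ while the cost on the other side is only $|C-C|\leq|C|^2$ — which is exactly what produces the exponent $2/k$ in place of $k/2$.
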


\begin{?} Is $f_k(q) \leq c_k  h_k(q)$? Is $h_k(q) \leq f_k(q)$?
 \end{?}

 \begin{Th}\label{main'}
   \begin{description}
     \item[(a)]
    \[  \frac{2}{k} - \frac{1}{k^2}    \leq \beta_k \leq \frac{2}{k} \]
for all $k$. 
\item[(b)]
$k\beta_k$ is increasing.  
\end{description}
 \end{Th}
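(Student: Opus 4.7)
The plan is to prove both parts via Plünnecke--Ruzsa type inequalities, using the analogue of Theorem \ref{mindegy} (ensured by Lemmas \ref{monoton'}--\ref{fhviszony}) to pass freely between $f_k,g_k,h_k$; I shall argue with $h_k$ throughout.

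For the lower bound in (a), the Plünnecke--Ruzsa inequality (in its difference-set form) gives, for any finite $A$ in an abelian group,
\[ |kA| \leq \frac{|A-A|^k}{|A|^{k-1}}. \]
Combined with the trivial $|A| \geq |kA|^{1/k}$, this yields $|A-A| \geq |A|^{(k-1)/k}\,|kA|^{1/k} \geq |kA|^{(2k-1)/k^2}$, hence $h_k(q) \geq q^{(2k-1)/k^2}$, i.e.\ $\beta_k \geq 2/k - 1/k^2$. For the upper bound in (a), I would invoke a Bose--Chowla $B_k$-set $A\subset\setZ$ of size $n$: all $k$-fold sums are distinct, so $|kA| = \binom{n+k-1}{k}\sim n^k/k!$, while $|A-A|\leq n^2$ (a $B_k$-set is a fortiori Sidon). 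Calibrating $n$ so that $|kA|\geq q$ gives $h_k(q) \leq C_k q^{2/k}$ and therefore $\beta_k \leq 2/k$.

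For (b), the crucial inequality is
\[ |kA|^{k+1} \geq |(k+1)A|^k \qquad \text{for every finite } A. \]
Granted this, $|(k+1)A| \geq q$ forces $|kA| \geq q^{k/(k+1)}$, so any minimiser $A$ for $h_{k+1}(q)$ satisfies $|A-A| \geq h_k(q^{k/(k+1)})$; hence $h_{k+1}(q) \geq h_k(q^{k/(k+1)})$, and letting $q \to \infty$ yields $\beta_{k+1} \geq k\beta_k/(k+1)$, i.e.\ $(k+1)\beta_{k+1}\geq k\beta_k$. To establish the boxed inequality, I would apply the Plünnecke--Petridis inequality $|X+Y+Z|\cdot|X| \leq |X+Y|\cdot|X+Z|$ with $X=(k-1)A$ and $Y=Z=A$, which produces the log-concavity
\[ |(k-1)A|\cdot|(k+1)A| \leq |kA|^2 \qquad (k\geq 1), \]
with the convention $0\cdot A=\{0\}$. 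Since $\log|0\cdot A|=0$, the sequence $s_k=\log|kA|$ is concave with $s_0=0$, and such sequences have non-increasing secant slopes $s_k/k$ (a short induction using $s_{k+1}\leq 2s_k - s_{k-1}$ and the inductive hypothesis $s_{k-1}\geq \frac{k-1}{k}s_k$); this is exactly $|kA|^{k+1}\geq|(k+1)A|^k$.

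The step I expect to be the main obstacle is choosing the right Plünnecke-style ingredient for part (b): the Plünnecke--Ruzsa bound used in (a) does not suffice, and one really needs the finer Plünnecke--Petridis inequality to extract log-concavity of the sumset growth $|kA|$, from which monotonicity of $|kA|^{1/k}$ (and hence the desired comparison between $\beta_k$ and $\beta_{k+1}$) follows by a standard concavity argument.
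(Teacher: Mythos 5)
Part (a) of your proposal is correct and matches the paper's argument almost verbatim: combine the Pl\"unnecke--Ruzsa bound $|kA| \leq |A-A|^k/|A|^{k-1}$ with the trivial $|kA|\leq|A|^k$ to get $|A-A|\geq|kA|^{(2k-1)/k^2}$, and a $B_k$ (or generic) set gives the upper bound $\beta_k\leq 2/k$. The reduction you use in (b) --- from $|kA|^{k+1}\geq|(k+1)A|^k$ (i.e.\ $|kA|^{1/k}$ nonincreasing) to $(k+1)\beta_{k+1}\geq k\beta_k$ via a minimiser for $h_{k+1}(q)$ and monotonicity of $h_k$ --- is also sound, and is the same deduction the paper makes.

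The gap is in how you justify $|kA|^{1/k}$ being nonincreasing. You propose to derive it from the ``log-concavity'' $|(k-1)A|\,|(k+1)A|\leq|kA|^2$, which you in turn want to get from an inequality $|X|\,|X+Y+Z|\leq|X+Y|\,|X+Z|$ that you call the Pl\"unnecke--Petridis inequality. That attribution is incorrect: Petridis's lemma asserts $|X^*||X^*+Y+Z|\leq|X^*+Y||X^*+Z|$ only for a subset $X^*\subseteq X$ minimising $|X'+Y|/|X'|$ over nonempty $X'\subseteq X$, not for arbitrary $X$; and since $|X+Y+Z|\geq|X^*+Y+Z|$, the minimiser version does not transfer to $X$ itself. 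The unconditional inequality you wrote, applied with $X=(k-1)A$, $Y=Z=A$, is exactly log-concavity of $(|kA|)_{k\geq 0}$, which is a strictly stronger statement than what is needed and is not an established theorem (to my knowledge it is open, and it is certainly not the Pl\"unnecke--Petridis inequality). The paper does not attempt anything of this sort: it simply cites \cite{r10a} for the fact that $|kA|^{1/k}$ decreases, which is proved there via the supermultiplicativity estimate $|A_1+\cdots+A_n|^{n-1}\leq\prod_i|A_1+\cdots+\widehat{A_i}+\cdots+A_n|$ by a substantially different (Pl\"unnecke-graph-type) argument, not via pointwise log-concavity. So part (b) of your proposal rests on an unproved lemma and does not constitute a proof as written; to fix it you should invoke the supermultiplicativity theorem of \cite{r10a} directly rather than try to re-derive it from a nonexistent two-sided Pl\"unnecke--Petridis bound.
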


 \begin{proof}
   For the lower estimate we show that
    \[ |kA| < |A-A|^{k^2/(2k-1)} \]
for every finite set in any commutative group. Write $|A| = n$, $|A-A| = tn$. By a Pl\"unnecke-type inequality (see e.g.
\cite{plunnecke70},\cite{r89e},\cite{nathanson96},\cite{taovu06},\cite{r09b})
 we get
\begin{equation}\label{plu}
|kA| \leq t^k n,\end{equation}
and obviously
\begin{equation}\label{triv}
|kA| < n^k . \end{equation}
By multiplying the $k$'th power of \eq{plu} and  $(k-1)$'th power of \eq{triv} and taking $k^2$'th root we get the desired bound.

For the upper estimate take a generic set without any coincidence among the $k$-fold sums.

Claim (b) follows from the fact that $|kA|^{1/k}$ is a decreasing function of $k$, see \cite{r10a}.
 \end{proof}

Claim (b) above leaves two possibilities: either  always $\beta_k<2/k$, or  $\beta_k=2/k$ after a point.

\begin{?}
Is always $\beta_k<2/k$?
 \end{?}

\begin{Conj}   Yes.
 \end{Conj}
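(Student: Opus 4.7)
The plan is to prove the two parts of Theorem~\ref{main'} separately, leaning on the equivalence of exponents from Theorem~\ref{mindegy'} so that it is enough to control $h_k$ in place of $g_k$ itself.

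For the lower bound $\beta_k \geq 2/k - 1/k^2$ in part~(a) I would first establish the universal inequality
\[
 |kA| \leq |A-A|^{k^2/(2k-1)}
\]
valid for every finite $A$ in a commutative group. Setting $n=|A|$ and $t=|A-A|/n$, the Pl\"unnecke--Ruzsa inequality in its difference-set form (with one summand zero) yields the multiplicative bound $|kA| \leq t^k n$, while counting unordered $k$-multisets in $A$ gives the trivial $|kA| \leq \binom{n+k-1}{k} \leq n^k$. Raising the first estimate to the $k$-th power, the second to the $(k-1)$-th power, and multiplying, one obtains $|kA|^{2k-1} \leq t^{k^2} n^{k^2} = |A-A|^{k^2}$, which is the claim. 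Applied to any $A$ with $|kA|\geq q$ it forces $|A-A|\geq q^{(2k-1)/k^2}$, so $h_k(q) \geq q^{2/k-1/k^2}$, and Theorem~\ref{mindegy'} converts this to $\beta_k \geq 2/k - 1/k^2$.

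For the upper bound $\beta_k \leq 2/k$ I would exhibit a ``generic'' set $A \subset \setZ$ of size $n \asymp q^{1/k}$ all of whose unordered $k$-fold sums are distinct (a $B_k^+$ set); such sets of any prescribed size are produced by a short probabilistic argument or by a Bose--Chowla/Singer-type explicit construction. Then $|kA| = \binom{n+k-1}{k} \geq q$ while $|A-A| \leq n(n-1)+1 = O(q^{2/k})$, so $h_k(q) = O(q^{2/k})$, and Theorem~\ref{mindegy'} gives $\beta_k \leq 2/k$.

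For part~(b) the key input is the result of \cite{r10a} that $|kA|^{1/k}$ is a decreasing function of $k$, equivalently $|kA| \geq |(k+1)A|^{k/(k+1)}$. I would apply this to a set $A$ nearly minimising $h_{k+1}(q)$, so that $|(k+1)A| \geq q$ and $|A-A| \leq (1+o(1))\, h_{k+1}(q)$. The decreasing-root inequality then gives $|kA| \geq q^{k/(k+1)}$, which makes $A$ admissible for $h_k$ at the argument $\lceil q^{k/(k+1)}\rceil$, whence
\[
 h_k\bigl(\lceil q^{k/(k+1)}\rceil\bigr) \leq (1+o(1))\, h_{k+1}(q).
\]
Taking logarithms, dividing by $\log q$, and sending $q\to\infty$ using the limit identity of Theorem~\ref{mindegy'} produces $\tfrac{k}{k+1}\beta_k \leq \beta_{k+1}$, i.e.\ monotonicity of $k\beta_k$.

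The main obstacle is genuinely mild: every tool either appears elsewhere in the paper or is classical. The only points requiring care are the construction of the $B_k^+$ set of near-optimal size for the upper bound (standard, but worth spelling out via either random choice or Bose--Chowla) and the passage $q\to\infty$ in~(b), which depends on Theorem~\ref{mindegy'} already having been verified for the dual functions $f_k,g_k,h_k$ in this opposite setting.
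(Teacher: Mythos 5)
The statement you are being asked to prove is the Conjecture asserting the \emph{strict} inequality $\beta_k < 2/k$ for all $k$, answering the Problem immediately preceding it. The paper offers no proof of this; it is explicitly left open, with the remark that the only known case is $k=2$. Your proposal instead re-derives Theorem~\ref{main'}, which establishes only the weaker, non-strict bound $\beta_k \leq 2/k$ (together with the lower bound $2/k - 1/k^2$ and the monotonicity of $k\beta_k$). In particular, your generic $B_k^+$ construction gives $h_k(q) \leq C_k\, q^{2/k}$ with a multiplicative constant $C_k>1$ (roughly $(k!)^{2/k}$), so for every individual $q$ the ratio $\log h_k(q)/\log q$ lies strictly above $2/k$, and the infimum over $q$ therefore yields only $\beta_k \leq 2/k$. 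To obtain the conjectured strictness you would need to exhibit, for each $k$, a single $q$ (equivalently a single set $A$) with $\log g_k(q)/\log q$ genuinely below $2/k$ --- that is, a set whose difference set is polynomially smaller than its $k$-fold sumset --- and your argument produces no such example. The remainder of your writeup (the Pl\"unnecke step for the lower bound and the use of $|kA|^{1/k}$ being decreasing for part~(b)) is a correct reconstruction of the paper's sketch of Theorem~\ref{main'}, but that is a different and strictly weaker statement than the one at issue.
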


As far as I know, the only known case is $k=2$. I think the case $k=4$ is particularly interesting:

\begin{?}
Is always $|4A| \leq |A-A|^2$?
 \end{?}





   \bibliographystyle{amsplain}
     \bibliography{cimek,cikkeim}



     \end{document}